\title{Selective Hypergraph Colourings}
\date{}
\begin{document}
\newtheorem{theorem}{Theorem}[section]
\newtheorem{definition}{Definition}[section]
\newtheorem{proposition}[theorem]{Proposition}
\newtheorem{corollary}[theorem]{Corollary}
\newtheorem{lemma}[theorem]{Lemma}
\newcommand*\cartprod{\mbox{ } \Box \mbox{ }}
\newtheoremstyle{break}
  {}
  {}
  {\itshape}
  {}
  {\bfseries}
  {.}
  {\newline}
  {}

\theoremstyle{break}

\newtheorem{propskip}[theorem]{Proposition}
\DeclareGraphicsExtensions{.pdf,.png,.jpg}
\author{Yair Caro \\ Department of Mathematics\\ University of Haifa-Oranim \\ Israel \and Josef  Lauri\\ Department of Mathematics \\ University of Malta
\\ Malta \and Christina Zarb \\Department of Mathematics \\University of Malta \\Malta }

\title{Two short proofs of the Perfect Forest Theorem}
\date{}
\author{Yair Caro$^a$, Josef Lauri$^b$, Christina Zarb$^b$ \footnote{Corresponding Author: Christina Zarb \newline Tel:  00356 25907224 \newline Fax  :00356 21333908 \newline Yair Caro:  yacaro@kvgeva.org.il \newline Josef Lauri:   josef.lauri@um.edu.mt \newline Christina Zarb:   christina.zarb@um.edu.mt  }\\
\\
$^a$  University of Haifa-Oranim, Tivon 36006, Israel.\\
\\
$^b$ University of Malta, Msida MSD 2080, Malta.
 }

\maketitle

\begin{abstract}
A perfect forest is a spanning forest of a connected graph $G$, all of whose components are induced subgraphs of $G$ and such that all vertices have odd degree in the forest. A perfect forest generalised a perfect matching since, in a matching, all components are trees on one edge. Scott first proved the Perfect Forest Theorem, namely, that every connected graph of even order has a perfect forest. Gutin then gave another proof using linear algebra.

We give here two very short proofs of the Perfect Forest Theorem which use only elementary notions from graph theory. Both our proofs yield polynomial-time algorithms for finding a perfect forest in a connected graph of even order.

\end{abstract}

\section{Introduction}

We consider a graph $G$ on $n$ vertices.   A spanning subgraph $F$ of a graph $G$ is called a \emph{perfect forest} if
\begin{itemize}
\item{$F$ is a forest}
\item{the degree of every vertex in $F$ is odd}
\item{each subtree of $F$ is an induced subgraph of $G$.}
\end{itemize}

An important graph structure is a \emph{matching}, which  is a set of pairwise non-adjacent edges, that is  without common vertices.  A \emph{maximum matching} is a matching that contains the largest possible number of edges, and such a matching is said to be \emph{perfect} if it includes all the vertices in the graph \cite{lovasz2009matching}.  It is well-known that a perfect/maximum matching in a graph can be found in polynomial time \cite{Galil}.

In the case of a connected graph $G$ on $n$ vertices, $n$ is even, without a perfect matching, one would like to find other structures that are in a sense reminiscent of perfect matchings.   Clearly in a perfect matching each edge is induced and both degrees are odd.  This leads to the above definition of perfect forests.  Establishing the existence of a perfect forest, and finding one in  polynomial time are comparable to such questions with respect to perfect matchings.
 
Taking another point of view, the question of the maximum size of an induced subgraph of $G$ with all degrees odd has also been treated in several papers, and is also related to perfect forests \cite{caro1994induced,radclife1995every,scott1992large}. 

Scott \cite{scott2001induced} proved that every connected graph $G$ with an even number of vertices contains a perfect forest.  We call this the Perfect Forest Theorem. Scott's relatively long proof is graph theoretical and constructive.  Gutin \cite{gutin2015note}  provides a shorter proof using simple linear algebra arguments, which in turn can give a polynomial-time algorithm to find a perfect forest.

Gutin and Yeo have considered perfect forests in directed graphs in \cite{g2015note}.  They consider four generalizations to directed graphs of the concept of a perfect forest and  extend Scott's theorem to digraphs in a non-trivial way.

An interesting algorithmic application of perfect forests can be found in \cite{wigderson1996new}, where Sharan and Wigderson use perfect forests, which they call pseudo-matchings, in an algorithm which yields a perfect matching in a bipartite cubic graph.  This again justifies finding short proofs for the Perfect Forest Theorem, which yield a polynomial time algorithm.

Motivated by this interest, we provide two simple graph theoretic proofs of the Perfect Forest Theorem, one by induction on the number of vertices, and the other by induction on the number of edges.  Both proofs yield a polynomial-time algorithm for finding a perfect forest in a graph with an even number of vertices.

In our proofs, we define the \emph{union} of two graphs $G_1$ and $G_2$, denoted by $G_1 \cup G_2$, as the graph with vertex set $V(G_1) \cup V(G_2)$ and edge set $E(G_1) \cup E(G_2)$.

\section{Two short proofs}

We start with a simple lemma which will be used in the first proof.

\begin{lemma} \label{evendegree}
Let $G$ be a connected graph on $n$ vertices where $n$ is even, and $G$ is not a tree.  Then $G$ has a spanning tree  with at least two vertices of $even$ degree.
\end{lemma}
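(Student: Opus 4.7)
The plan is to pick any spanning tree $T$ of $G$, observe that the parity of the number of vertices of even degree forces us into a convenient situation, and if the convenient situation fails then use an edge swap (guaranteed to exist because $G$ is not a tree) to repair it.

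First I would invoke the handshake lemma. Any spanning tree $T$ has $n-1$ edges, so the sum of its degrees is $2(n-1)$, which is even. Hence the number of odd-degree vertices in $T$ is even. Since $n$ itself is even, the number of even-degree vertices in $T$ is also even, so it is either $0$ or at least $2$. If it is at least $2$, we are done with $T$ itself.

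The remaining case is that every vertex of $T$ has odd degree. Here I would use the hypothesis that $G$ is not a tree: pick an edge $e = uv \in E(G) \setminus E(T)$. Adding $e$ to $T$ creates a unique cycle $C$, and any other edge $f = xy$ of $C$ yields a new spanning tree $T' = (T - f) + e$. In $T'$, the degrees change only at the (at most) four vertices $u,v,x,y$: the endpoints of $e$ gain one, and the endpoints of $f$ lose one. I would then check the two subcases. If $e$ and $f$ share no vertex, then all four of $u,v,x,y$ had odd degree in $T$ and now have even degree in $T'$, giving four even-degree vertices. If $e$ and $f$ share exactly one vertex, say $v=x$, then $\deg_{T'}(v) = \deg_T(v)$ stays odd, while $\deg_{T'}(u) = \deg_T(u)+1$ and $\deg_{T'}(y) = \deg_T(y)-1$ both become even, giving at least two even-degree vertices. (The case $\{u,v\}=\{x,y\}$ is impossible since $G$ is simple and $e \neq f$.) Either way, $T'$ is the desired spanning tree.

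I do not anticipate a significant obstacle; the only thing that needs care is the case analysis on whether $e$ and $f$ share an endpoint, and the implicit use of simplicity of $G$ to rule out the degenerate situation where the two degree perturbations cancel exactly. The argument is constructive and runs in polynomial time, as finding a spanning tree and performing a single edge swap are both efficient.
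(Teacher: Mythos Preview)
Your proof is correct and follows essentially the same approach as the paper: take any spanning tree, note by parity that the number of even-degree vertices is even, and if it is zero perform a single edge swap using an edge of $G$ outside $T$. The only cosmetic difference is that the paper chooses the removed edge $f$ to be the edge of $T$ incident with $u$ on the $u$--$v$ path, so that $e$ and $f$ automatically share the endpoint $u$; this lands directly in your second subcase and spares the case analysis.
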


\begin{proof}
Let $T$ be a spanning tree of $G$.  Then clearly, the number of vertices of even degree must be even since the total number of vertices is even.

Let us assume that all vertices of $T$ have odd degree.  Since $G$ is not a tree, there must be at least one edge $e \in E(G\backslash T)$.  Let this edge join the vertices $u$ and $v$.  Consider the tree $T^*$ obtained from $T$ by adding the edge $e$ and deleting the edge connecting $u$ to $w$ where $w$ is the unique neighbour of $u$ on the path from $u$ to $v$ in $T$. Then the degree of $v$ and $w$ are now even, and the spanning tree $T^*$ is as required.
\end{proof}

\begin{theorem}
Let $G$ be a connected graph on $n$ vertices where $n$ is even.  Then $G$ has a perfect forest.
\end{theorem}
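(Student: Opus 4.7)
The plan is to prove the theorem by induction on $n$.  The base case $n=2$ is immediate: the single edge of $G$ is a perfect forest.

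For the inductive step with $n \geq 4$, I would first dispose of the case when $G$ is a tree.  By parity the set $S$ of even-degree vertices of $G$ has even cardinality; when $S = \emptyset$ we take $F = G$, and otherwise I would remove the (unique) set $J$ of edges that flips parity exactly at $S$ (the $T$-join for $S$, which in a tree is easily exhibited by walking between the paired vertices of $S$), yielding a spanning forest $F := G \setminus J$ with all odd degrees.  Each component of $F$ is a subtree of $G$ and is therefore automatically induced in $G$, since $G$ itself is a tree.

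If $G$ is not a tree, I would invoke Lemma~\ref{evendegree} to produce a spanning tree $T$ of $G$ containing at least two vertices $u,v$ of even degree in $T$.  Using $T$ I would locate an adjacent pair of vertices $x,y$ in $G$ whose removal leaves $G' := G - \{x,y\}$ connected and of even order $n-2$ — for instance a leaf $x$ of $T$ together with its neighbour $y$, pruned with the help of $u,v$ so as to avoid cut-vertex obstructions.  The induction hypothesis applied to $G'$ delivers a perfect forest $F'$ of $G'$, and I claim that $F := F' \cup \{xy\}$ is a perfect forest of $G$: the new component $\{x,y\}$ is a single edge, hence induced, with both endpoints of odd degree $1$, and the components of $F'$ are unchanged and still induced in $G$ since they share no vertex with $\{x,y\}$.

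The main obstacle is the existence of the required adjacent pair $(x,y)$ in the non-tree case.  This is exactly where Lemma~\ref{evendegree} contributes: the parity slack provided by the two even-degree vertices of $T$ is what enables a short case analysis on the leaves of $T$ and on the $u$--$v$ path to locate a safe pair of non-cut-vertices.  I expect a handful of degenerate configurations (for instance when $T$ is a double star whose only even-degree vertices are its two centres) to need a dedicated argument, typically by deleting one cleverly chosen edge of $G$ and falling back to the tree case already handled.
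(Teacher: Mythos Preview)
Your tree case is correct and in fact tidier than the paper's: removing from a tree $G$ the $T$-join of its even-degree set $S$ yields a spanning forest in which every degree is odd (hence positive), and every component, being a subtree of the tree $G$, is automatically an induced subgraph.  The paper handles trees differently, by splitting at an even-degree vertex and recursing on two pieces, so here your argument is a genuine and pleasant alternative.

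The non-tree case, however, is not a proof but a programme.  You need an adjacent pair $x,y$ with $G-\{x,y\}$ connected, you explicitly call this ``the main obstacle'', and you never establish it.  The link you draw to Lemma~\ref{evendegree} is not convincing: the lemma hands you a spanning tree $T$ with two vertices of even $T$-degree, but nothing in that parity information points to a non-separating adjacent pair in $G$, and the phrase ``pruned with the help of $u,v$ so as to avoid cut-vertex obstructions'' is not an argument.  Your proposed fallback---``deleting one cleverly chosen edge of $G$ and falling back to the tree case''---cannot work as stated, since deleting one edge from a non-tree does not produce a tree, and your tree argument applies to $G$ itself, not to an arbitrary spanning subgraph of $G$.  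Even if the existence of such a pair $(x,y)$ is true for every connected non-tree graph, it is a separate lemma of nontrivial strength, and right now your inductive step rests on it unproved.

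For comparison, the paper uses Lemma~\ref{evendegree} in a completely different and self-contained way.  Once the spanning tree $T$ has a vertex $w$ of even $T$-degree, a parity count shows that at least one branch $B_1$ of $T$ at $w$ has even order; then $G[B_1]$ and $G[V\setminus B_1]$ are both connected (each contains a subtree of $T$ spanning it) and both of even order, so induction applies to each piece and the union of the two perfect forests is a perfect forest of $G$.  No removable adjacent pair is needed.
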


\begin{proof} 

{\bf First proof.}

\medskip
\noindent
We prove this by induction on $n$.  For $n=2$, the statment is clearly true.  So suppose $n \geq 4$ is even.
\begin{enumerate}
\item{If $G$ is a tree with all degrees odd, then $G$ is the required perfect forest.}
\item{If $G$ is a tree with at least one vertex $w$ of even degree, let the branches of $w$ be $B_1, \ldots, B_t$, where $t$ is even.  Then at least one of these branches, say $B_1$, contains an even number of vertices, otherwise the number of vertices in $G$ is odd. Let $G_1$ be the induced subgraph of $G$ on $B_1$, and $G_2$ the subgraphs induced by $w$ and $B_2,\ldots,B_t$.  Clearly $G_1$ and $G_2$ are both connected and have even order, and hence by induction, there exists perfect forests $F_1$ and $F_2$ in $G_1$ and $G_2$ respectively.  Hence, $F_1 \cup F_2$ is a perfect forest of $G$.}
\item{If $G$ is not a tree, then by Lemma \ref{evendegree},  $G$ has  a spanning tree $T$ with at least two vertices of even degree, and hence by the same argument presented in (2) above, $T$ enables us to split $G$ into two vertex-disjoint connected components each of even order, $G_1$ and $G_2$ , and applying induction on both components gives a perfect forest.}
\end{enumerate}
\emph{Remark:} This proof tells us that either $G$ is a tree with all vertices having odd degree or $V(G)$ can be split as $V=A\cup B$, $A\cap B=\emptyset$ and both $A$ and $B$ induce  connected components each of even order (hence induction applies) .

\medskip
Now we give the second proof.

\medskip
\noindent
{\bf Second Proof.}

\medskip
\noindent
We now use induction on the number of edges $m$.  The case $m=1$ is trivial.  So assume $m \geq 2$ and $n$ is even.  Consider $G-e$, for some $e \in E(G)$.
\begin{enumerate}
\item{Consider the case where  $G-e$ is disconnected.  If the two components $G_1$ and $G_2$ each have an even number of vertices, then by the induction hypothesis, they each have a perfect forest $F_1$ and $F_2$ respectively, and $F_1 \cup F_2$ is a perfect forest in $G-e$ and hence in $G$.

If $G_1$ and $G_2$ each have an odd number of vertices, then consider $G_1'=G_1+e$ and $G_2'=G_2+e$.  Each has an even number of vertices because we are adding a new vertex to each, and hence by the induction hypothesis,  they each have a perfect forest $F_1'$ and $F_2'$ respectively.  But then  $F_1' \cup F_2'$ is a perfect forest for  $G$.}
\item{If $G-e$ is still connected, by the induction hypothesis there exists a perfect forest $F$ in $G-e$.  If the edge $e$ joins different components of $F$, then $F$ is still a perfect forest in $G$.  If, on the other hand, $e$ joins two vertices, say $u$ and $v$,  in the same component $H$ of $F$, let $K$ be the union of the other components of $F$ (we are not excluding the possibility that $K$ is empty, that is, $F=H$, that is, $F$ is a tree). 

Now consider the subgraph induced in $G$ by the vertices of $H$.  The key fact here is that this induced subgraph of $G$ contains no edges other than those in $E(H)$ except $e$, since $H$ is a connected induced subgraph of $G-e$. 

Therefore $H+e$ is an induced subgraph of $G$. This induced subgraph $H+e$ now has exactly one cycle $C$ which contains the edge $e$.  Consider $H-E(C)$, that is, the subgraph of $H$ obtained by removing all the edges of this cycle.  Now $H-E(C)$ contains no cycles so it is forest all of whose components are induced sub-trees of $G$ because every edge in $E(C)$ joins two vertices in different components of $H-E(C)$. Now, the degrees of all vertices in $H-E(C)$ except $u$ and $v$ are reduced by 2 when the edges in $E(C)$ are removed. Therefore they are still odd in $H-E(C)$ since they were odd in $H$. The degrees of $u$ and $v$ in $H-E(C)$ are, however, one less than in $H$ since they are reduced by 1 when removing the edges of $E(C)$.  Of course the degrees of the other vertices of $F$, those in $K$, are unchanged. 

So we put back the edge $e$ into $H-E(C)$, giving the graph $H'$, in which all vertices, including $u$ and $v$, have odd order. Also, $H'$ is induced in $G$ by the key fact pointed out above.  Therefore $F'$, the forest containing the union of $H'$ and $K$ rather than $H$ and $K$, is a perfect forest in $G$.}
\end{enumerate}
\end{proof}

\emph{Remark:} Clearly, both  proofs are constructive and can be implemented in polynomial time.

\section{Conclusion}

Perfect forests have attracted the attention of various graph theorists as one natural extension of the idea of perfect matchings. In the literature two proofs of the Perfect Forest Theorem were known, one using techniques from linear algebra. We here give two very short proofs which use only elementary notions from graph theory, both of which yield polynomial-time algorithms for finding a perfect forest in a graph of even order.

\bibliographystyle{plain}
\bibliography{bibtrees}

\end{document}